\newtheorem{theorem}{Theorem}
\newtheorem{lemma}{Lemma}
\newtheorem{assumption}{Assumption}
\newtheorem{remark}{Remark}
\newtheorem{example}{Example}
\begin{document}
%
\title{Distributed Consensus of Heterogeneous Multi-Agent Systems Based on Feedforward Control}

%
%
\author{
\thanks{This work was supported by the National Natural Science Foundation of China (62103240),
the Original Exploratory Program Project of National Natural Science Foundation of China (62250056), the Joint Funds of the National Natural Science Foundation of China (U23A20325), the Major Basic Research of Natural Science Foundation of Shandong Province (ZR2021ZD14), the Youth Foundation of Natural Science Foundation of Shandong Province (ZR2021QF147) and the High-level Talent Team Project of Qingdao West Coast New Area (RCTD-JC-2019-05).}

\author{Liping Zhang, 
       Huanshui Zhang
\thanks{This work was supported by
the Original Exploratory Program Project of National Natural Science Foundation of China (62250056),
the National Natural Science Foundation of China (62103240),the Joint Funds of the National Natural Science Foundation of China (U23A20325), the Major Basic Research of Natural Science Foundation of Shandong Province (ZR2021ZD14), the Youth Foundation of Natural Science Foundation of Shandong Province (ZR2021QF147) and the High-level Talent Team Project of Qingdao West Coast New Area (RCTD-JC-2019-05).}
\thanks{L. Zhang and H. Zhang are with the College of Electrical Engineering and Automation,
Shandong University of Science and Technology, Qingdao 266590, China (e-mail:lpzhang1020@sdust.edu.cn;hszhang@sdu.edu.cn).}
}

}

%
%


\maketitle

\begin{abstract}
This paper studies the consensus problem of heterogeneous multi-agent systems by linear quadratic (LQ) optimal control theory.
Different from the existing consensus control  for heterogeneous systems where a reference generator requires to be designed,
which leads to a more complex design than that of the homogeneous case.
This paper develops a distributed asymptotically optimal consensus controller by designing a novel kind of observer and feedforward controller.
The proposed  controller is obtained by solving Riccati equations, and
it is a unified design framework  to address the consensus problem for heterogeneous and homogeneous systems.
Moreover, the convergence speed under the proposed distributed asymptotically optimal controller is proved to be faster than those of traditional methods.
Simulation example  further verifies the effectiveness of the proposed scheme and a much faster convergence speed than the existing algorithms.
\end{abstract}

\begin{IEEEkeywords}
 Heterogeneous multi-agent system,
Distributed feedforward control,
LQ Optimal control,
Heterogeneity.
\end{IEEEkeywords}

%
\IEEEpeerreviewmaketitle

\section{Introduction}
In recent years, multi-agent systems has attracted considerable attentions for its
extensive engineering applications in unmanned aerial vehicle and satellite formation,
multi-robot systems, and wireless sensor networks \cite{Ren2007a,Olfati-Saber2007,Yang2022}.
 The consensus problem of multi-agent systems is an important and fundamental problem,
 whose essential task is to design a distributed controller based on the interaction with neighbor agents such that all agents achieve an agreement for certain variable of common interest.
A general framework of the consensus problem for first-order integrator networks is addressed with a fixed topology in \cite{Olfati-Saber2004}.
 \cite{Ren2007} further proposes leader-following consensus algorithms for second-order integrator agents.
\cite{Ma2010} and \cite{You2011} develop necessary and sufficient conditions for consensusability of discrete-time and continuous-time multi-agent systems, respectively.
Variants of these algorithms have been designed with
different communication topologies, system dynamics and uncertain features, see the works \cite{Huang2017a,Zhu2018,Xu2021,Xu2018}  and the references therein.
It is noted that the aforementioned works focus on homogeneous systems with  identical dynamics.

However, in practice, many systems has heterogeneity with different system dynamics and even state space dimension.
Under the circumstances, the traditional state consensus algorithm based on Kronecker product method is not applicable,
so the consensus problem of heterogenous multi-agent system is more challenging than the homogeneous case.
In recent years,
 the distributed feedforward control approach \cite{Su2012a,Huang2017} has been widely used in solving the consensus problem of heterogeneous multi-agent systems.
 Based on the feedforward control theory framework, researchers also further studied the robust output regulation problem \cite{Bi2022},
 leader-follower consensus for uncertain nonlinear systems subject to communication delays and switching networks \cite{Lu2018},
 and bipartite output consensus over cooperative-competition networks \cite{Yaghmaie2017}, to name just a few.
The core of this method includes two points: the design of distributed observer or compensator to estimate the leader's state, and
 solving output regulation equations.
It should be pointed that the choice of the coupling gain to ensure the stability of distributed observer is  dependent on the eigenvalues of the communication topology.
Besides, in light with the distributed internal model method \cite{Wieland2011},
some effort has also been made for the consensus of heterogeneous systems \cite{Zuo2017},
where an internal model requirement is necessary and sufficient for the synchronizability of heterogeneous agents,
and a transmission zero condition never holds when the dimension of the system output is greater than that of the system input.
 However, the aforementioned results focus on the output consensus problem and has not considered  the performance optimization.

The global optimality problem of control protocols  means that the designed consensus protocol can optimize a global cost function.
Its main difficulty is that the state information of each agent is available to all other subsystems which is hard to obtain in most applications.
Therefore, the optimality problem becomes quite complicated, and the development on the optimal consensus problem is comparatively slow.
By using linear quadratic regulator (LQR) methods based on Riccati solution,
\cite{Cao2010} derives a centralized optimal consensus protocol for single integrator multi-agent system, while the corresponding optimal topology is a complete graph rather than a distributed one.
\cite{Sun2021} designes a distributed optimal controller based on non-Riccati solution for second-order multi-agent systems.
The distributed optimal control problem for the general linear multi-agent systems has been studied in \cite{Movric2014,Chen2020,Zhang2021,Jiao2020},
but the cost function is very complicated and to some extent man-made, so there is a shortage of practical significance.
 Moreover, the distributed control protocols in the above works are merely applicable to homogeneous systems.
 Although works \cite{Kiumarsi2017,Rizvi2019,Chen2020a}
 study the optimal output synchronisation problem, but they only concern local cost functions without considering the global performance cost.

Inspired by the above analyses, the tradition consensus controller for heterogeneous systems is hard to be the global optimal.
In this paper, we study the consensus problem of  heterogeneous multi-agent systems with a novel consensus control protocol based on
 the feedforward control and LQ optimal control theory.
 Compared with the existing results, the main contributions of this work are:
 1) To overcome the heterogeneity's barriers, we first design a distributed  feedforward controller based on the state and input from neighbor agents to convert the
   error system with neighbour agents into a standard linear system.
 Then, an optimal feedback controller based on the observer incorporating each agent's historical state information is designed
  by minimizing the state consensus of different neighbour agents;
2) The proposed feedback controller is obtained by solving  Riccati equations, and the corresponding global cost function under the proposed controllers is asymptotically optimal;
 3) A unified design way is provided to handle the consensus problem for heterogeneous system and homogeneous system.
 In particular, it does not require an additional reference generator, and removes the solvability assumption of output regulation equations.
 Moreover, the convergence speed is proved to be faster than that of the traditional methods.


The following notations will be used throughout this paper:
$\mathbb{R}^{n\times m}$ represents the set of $n\times m$-dimensional real matrices.
$I_{p}$ is the identity matrix with  dimension $p\times p$.
$\mbox{diag}\{a_1,a_2,\cdots,a_{N}\}$ denotes the diagonal matrix with diagonal elements being $a_1,\cdots,a_{N}$.
$\|x\|$ is the 2-norm of a vector $x$.
$\rho(A)$ is the spectral radius of matrix $A$.
$A^{T}$ and $A^{\dag}$  denote the transpose and the Moore-Penrose inverse of a matrix $A$.
$Rang(A)$ denotes the range space of $A$.

Let the interaction among $N+1$ agents be described by a directed graph
$\mathcal{G}=\{\mathcal{V},\mathcal{E},\mathcal{A}\}$,
where $\mathcal{V}=\{0,1,2,\cdots,N\}$ is the set of vertices (nodes),
$\mathcal{E}\subseteq \mathcal{V}\times \mathcal{V}$ is the set of edges.
Here, the node $0$ is associated with the leader system,
and the nodes $i=1,\cdots,N$ are associated with the followers systems.
$\mathcal{A}=[a_{ij}]\in  \mathbb{R}^{(N+1)\times (N+1)}$ is the weight matrix of $\mathcal{G}$,
$a_{ij}\neq 0$ if and only if the edge $(i,j)\in \mathcal{E}$, i.e.,
agent $j$ can use the state or the input of agent $i$ for control.
The neighbor of agent $i$ is denoted by $\mathcal{N}_{i}=\{j|(i,j)\in \mathcal{E}\}$.
A directed graph has a spanning tree if there exists a node called root such that there exists a
 directed path from this node to every other node.


\section{Problem Formulation} \label{sec:Formulation}

 We consider a  heterogeneous  discrete-time multi-agent system consisting of $N$ agents over a directed graph $\mathcal{G}$ with the dynamics of each agent given by
\begin{align}\label{multi-agent-system}
x_{i}(k+1)=A_{i}x_{i}(k)+B_{i}u_{i}(k), i=1,2,\cdots,N
\end{align}
where $x_{i}(k)\in \mathbb{R}^{n}$ and $u_{i}(k)\in \mathbb{R}^{m}$ are the state and the input of each agent.
$A_{i}\in \mathbb{R}^{n\times n}$ and $B_{i}\in \mathbb{R}^{n\times m}$ are the coefficient matrices, respectively.

The dynamic of the leader is given by
\begin{align}\label{leader}
x_{0}(k+1)=A_{0}x_{0}(k),
\end{align}
where $x_{0}\in \mathbb{R}^{n\times n}$ is the state of the leader, and $A_{0}\in \mathbb{R}^{n\times n}$ is the system matrix.

The cost function of the multi-agent system  \eqref{multi-agent-system} is given by
\begin{align}\label{cost-function}
J(s,\infty)&=\sum_{k=s}^{\infty}\left(\sum_{i=1}^{N} \sum_{j\in \mathcal{N}_{i}}(x_{i}(k)-x_{j}(k))^{T}Q(x_{i}(k)-x_{j}(k))
\right.
\notag\\
&\quad \left.+ \sum_{i=1}^{N} u_{i}^{T}(k)R_{i}u_{i}(k)\right),
\end{align}
where $Q\geq 0$ and $R_{i}>0$ are weighting matrices.

We aim to design a distributed control protocol $u_{i}(k)$
based on the available local information  of the multi-agent system \eqref{multi-agent-system}-\eqref{leader} to achieve the leader-following consensus, i.e.,
i.e., for any initial conditions $x_{i0}$,
\begin{align*}
\lim_{k\to\infty}\|x_{i}(k)-x_{0}(k)\|=0,\quad i=1,\cdots,N.
\end{align*}

\begin{assumption}\label{graph-assumption}
The directed graph $\mathcal{G}$ has a spanning tree with the node $0$ as the root.
\end{assumption}

\section{Main Results}\label{sec:main-results}
\subsection{State consensus of heterogeneous multi-agent systems}\label{sec:state-consensus}
Define the relative state error variables between agent $i$ and $j$ as:
\begin{align*}
e_{ij}(k)=x_{i}(k)-x_{j}(k).
\end{align*}
Then, the state error system is given by
\begin{align}\label{neighbour-error-system}
e_{ij}(k+1)&=A_{i}e_{ij}(k)+B_{i}u_{i}(k)
\notag\\
&\quad +(A_{i}-A_{j})x_{j}(k)-B_{j}u_{j}(k)
\end{align}
with $i=1,\cdots,N$, $j\in \mathcal{N}_{i}$ and $B_{0}=0$.

Let the controller for the $i$-th agent be given by
\begin{align}\label{distributed-controller}
u_{i}(k)=\tilde{u}_{i}(k)+\bar{u}_{i}(k),
\end{align}
we can view the last two parts in \eqref{neighbour-error-system} as a ``disturbance" term $d_{i}(k)=(A_{i}-A_{j})x_{j}(k)-B_{j}u_{j}(k)$,
to compensate $d_{i}(k)$, the first part feedforward controller  $\tilde{u}_{i}(k)$ in \eqref{distributed-controller} is designed as:
\begin{itemize}
  \item Case 1: $B_{i}$ is an invertible matrix, then
   \begin{align}\label{feedforword-controller}
\tilde{u}_{i}(k)=-B_{i}^{-1}(A_{i}-A_{j})x_{j}(k)+B_{i}^{-1}B_{j}u_j(k),
\end{align}
  \item Case 2: $Rang([A_{j}-A_{i}, B_{j}])\subseteq Rang (B_{i})$, then
\end{itemize}
\begin{align} \label{feedforword-controller-case2}
\tilde{u}_{i}(k)&=-B_{i}^{\dagger}[(A_{i}-A_{j})x_{j}(k)+B_{j}u_j(k)]
\notag\\
&\quad +(I-B_{i}^{\dagger}B_{i})z(k), \mbox{~for~} \forall z(k).
\end{align}
It's clear that $\tilde{u}_{i}(k)$ only needs the state and input information from the neighbor agent $j$, so it is a distributed form.
Under the feedforward control \eqref{feedforword-controller} or \eqref{feedforword-controller-case2}, the relative neighbor error system \eqref{neighbour-error-system}
is rewritten as the following standard linear system:
\begin{align}\label{standard-error-system}
e_{ij}(k+1)&=A_{i}e_{ij}(k)+B_{i}\bar{u}_{i}(k).
\end{align}

Since the consensus for heterogeneous multi-agent systems  \eqref{multi-agent-system} and \eqref{leader} is equivalent to the stability of the state error system \eqref{neighbour-error-system}.
Even though the total edge number over the communication graph could be more than $N$, we only need to select edges along a spanning tree.
Therefore, the corresponding global error system is written as
\begin{align}\label{gloabl-closed-error-system}
E(k+1)=\tilde{A}E(k)+\tilde{B}\bar{u}(k),
\end{align}
where $E(k)=\begin{bmatrix}
e_{10}^{T}(k) & \cdots & e_{ij}^{T}(k) \cdots & e_{N,N-1}^{T}(k)
\end{bmatrix}^{T}$,
$\bar{u}(k)=\begin{bmatrix}
\bar{u}_{1}^{T}(k) & \bar{u}_{2}^{T}(k) \cdots & \bar{u}_{N}^{T}(k)
\end{bmatrix}^{T}$,
$\tilde{A}=diag\{A_{1};A_{2};\cdots;A_{N}\}$,
and $\tilde{B}=diag\{B_{1};B_{2};\cdots;B_{N}\}$.

In this case, the cost function \eqref{cost-function} is reformulated as
\begin{align}\label{cost-function-error}
J(s,\infty)&=\sum_{k=s}^{\infty}\left(\sum_{i=1}^{N} \sum_{j\in \mathcal{N}_{i}}e_{ij}^{T}(k)Qe_{ij}(k)+
 \sum_{i=1}^{N} \bar{u}_{i}^{T}(k)R_{i}\bar{u}_{i}(k)\right)
\notag\\
&=\sum_{k=s}^{\infty}[E^{T}(k)\mathcal{Q}E(k)+\bar{u}^{T}(k)R\bar{u}(k)],
\end{align}
with $\mathcal{Q}=I_{N}\otimes Q$ and $R=diag\{R_{1},\cdots, R_{N}\}$.

\begin{remark}
The optimal consensus problem of heterogeneous multi-agent system \eqref{multi-agent-system},\eqref{leader} with performance cost \eqref{cost-function}
is converted to the optimal control problem of the global error system \eqref{gloabl-closed-error-system} with cost function \eqref{cost-function-error}.
Moreover, the cost function \eqref{cost-function} or its variant \eqref{cost-function-error} considered in this paper
is quite straightforward and general in contrast to earlier works \cite{Movric2014,Chen2020,Jiao2020}, where the state weight matrix $Q$  is assumed
to be of a special form, containing the global information of communication topology.
\end{remark}

The  key task is to determine the feedback controller $\bar{u}_{i}(k)$ based on the LQ optimal control theory.
In the ideal (complete graph) case, the error information $E(k)$ is available for all agents,
 the solvability of the optimal control problem for system \eqref{gloabl-closed-error-system} with the cost function \eqref{cost-function-error} is
 equivalent to the following standard LQ optimal control problem \cite{Anderson1971}.

\begin{lemma}\cite{Anderson1971}
Suppose that $E(k)$ is available for all agents.
Consider system \eqref{gloabl-closed-error-system} with cost \eqref{cost-function-error}.
 Then, the optimal controller is given
\begin{align}\label{centralized-optimal-control}
\bar{u}^{*}(k)=KE(k),
\end{align}
where the feedback gain $K$ is given by
\begin{align}\label{feedback-gain-matrix}
K=-(R+\tilde{B}^{T}P\tilde{B})^{-1}\tilde{B}^{T}P\tilde{A}
\end{align}
and $P$ is the solution of the following  ARE
\begin{align}\label{algebra-riccati-equation}
P=\tilde{A}^{T}P\tilde{A}+\mathcal{Q}-\tilde{A}^{T}P\tilde{B}(R+ \tilde{B}^{T}P\tilde{B})^{-1}\tilde{B}^{T}P\tilde{A}.
\end{align}
The corresponding optimal cost function is
\begin{align}\label{optimal-cost-function}
J^{*}(s,\infty)=E^{T}(s)PE(s).
\end{align}
Moreover, if $P$ is the unique positive definite solution to \eqref{algebra-riccati-equation},
then $\tilde{A}+\tilde{B}K$ is stable.
\end{lemma}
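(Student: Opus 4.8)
The plan is to reduce the infinite-horizon problem to a sequence of finite-horizon problems solved by dynamic programming and then to pass to the limit. First I would fix a terminal time $T$ and define the cost-to-go $V_k(E) = \min \sum_{t=k}^{T-1}[E^T(t)\mathcal{Q}E(t) + \bar u^T(t) R \bar u(t)]$ subject to the dynamics \eqref{gloabl-closed-error-system}. I would postulate that the value function is quadratic, $V_k(E) = E^T P_k E$ with terminal condition $P_T = 0$, and verify this by backward induction through the Bellman recursion $V_k(E) = \min_{\bar u}\{E^T\mathcal{Q}E + \bar u^T R \bar u + (\tilde A E + \tilde B \bar u)^T P_{k+1}(\tilde A E + \tilde B \bar u)\}$.

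The second step is the per-stage minimization. Since the bracketed expression is strictly convex in $\bar u$ (because $R>0$ and $P_{k+1}\ge 0$ make $R + \tilde B^T P_{k+1}\tilde B > 0$), its unique minimizer is found by setting the gradient to zero, giving $\bar u^*(k) = K_k E(k)$ with $K_k = -(R+\tilde B^T P_{k+1}\tilde B)^{-1}\tilde B^T P_{k+1}\tilde A$. Substituting back and completing the square yields the Riccati difference equation $P_k = \tilde A^T P_{k+1}\tilde A + \mathcal{Q} - \tilde A^T P_{k+1}\tilde B(R+\tilde B^T P_{k+1}\tilde B)^{-1}\tilde B^T P_{k+1}\tilde A$, which is exactly the fixed-point relation \eqref{algebra-riccati-equation}.

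For the infinite-horizon limit I would argue that the iteration (run backward, equivalently forward from $P_0=0$) is monotonically nondecreasing and bounded above whenever $(\tilde A,\tilde B)$ is stabilizable, hence converges to a limit $P$ satisfying the ARE \eqref{algebra-riccati-equation}; the time-invariant gain \eqref{feedback-gain-matrix} and the optimal cost \eqref{optimal-cost-function} then follow by continuity. The boundedness step is where stabilizability enters: any stabilizing feedback produces a finite infinite-horizon cost that dominates $E^T P_k E$ for every $k$, furnishing the upper bound.

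The final claim, stability of $\tilde A + \tilde B K$, I would establish by rewriting the ARE in closed-loop Lyapunov form. Using \eqref{feedback-gain-matrix} one verifies $P = (\tilde A + \tilde B K)^T P (\tilde A + \tilde B K) + \mathcal{Q} + K^T R K$. Taking an eigenpair $(\tilde A+\tilde B K)v = \lambda v$ gives $(1-|\lambda|^2)\,v^* P v = v^*(\mathcal Q + K^T R K)v \ge 0$, so $|\lambda|\le 1$ since $P>0$ forces $v^* P v>0$. I expect the boundary case $|\lambda|=1$ to be the main obstacle: it forces $v^*(\mathcal Q + K^T R K)v = 0$, hence $K v = 0$ (as $R>0$) and $\mathcal Q^{1/2} v = 0$, whence $\tilde A v = \lambda v$; ruling this out requires a detectability argument on $(\tilde A, \mathcal Q^{1/2})$, which together with the uniqueness and positive definiteness of $P$ forces $|\lambda| < 1$ and thus Schur stability of $\tilde A + \tilde B K$.
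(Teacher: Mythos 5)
First, a point about the comparison itself: the paper contains no proof of this lemma --- it is quoted as a known result from the cited Anderson--Moore LQ theory, with no proof environment following it. So your derivation can only be judged against the standard theory, not against an in-paper argument. On that score, your route (quadratic value function by backward induction, per-stage completion of squares yielding the Riccati difference equation, monotone convergence of the value iteration under stabilizability) is exactly the classical derivation, and those steps are sound as far as they go.

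The genuine gap is in your last two steps, and in both cases it is the same missing ingredient. (i) Stability: the lemma claims that uniqueness and positive definiteness of the ARE solution $P$ imply $\tilde{A}+\tilde{B}K$ is Schur stable. Your Lyapunov/eigenvector argument correctly reduces this to excluding a unit-circle eigenvalue $\lambda$ with eigenvector $v$ satisfying $\mathcal{Q}^{1/2}v=0$, $Kv=0$, $\tilde{A}v=\lambda v$; but you then appeal to detectability of $(\tilde{A},\mathcal{Q}^{1/2})$, which is \emph{not} a hypothesis of the lemma, and you never show that uniqueness of the positive definite solution can substitute for it. To close the lemma as actually stated, you would need, for instance, to show that a unit-circle unobservable closed-loop mode permits the construction of a second positive definite solution, contradicting uniqueness; that construction is absent, so the final claim is not proved. (ii) The cost formula: without detectability, the limit of your value iteration started from $P_0=0$ is the \emph{minimal} positive semidefinite solution of the ARE, which need not coincide with the positive definite $P$ appearing in the lemma. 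A scalar example makes this concrete: $\tilde{A}=2$, $\tilde{B}=1$, $\mathcal{Q}=0$, $R=1$ gives ARE solutions $P\in\{0,3\}$; your iteration stays at $0$, the unconstrained optimal cost is $0$ (apply no control), yet the unique positive definite solution is $P=3$. Hence the identification $J^{*}(s,\infty)=E^{T}(s)PE(s)$ with the positive definite solution requires either detectability or an explicit restriction to stabilizing policies --- a second place where the hypothesis you informally invoked does real work and must be either assumed or replaced by an argument from uniqueness.
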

It's obvious that multi-agent system \eqref{multi-agent-system} and \eqref{leader}
can achieve leader-follower consensus under the centralized optimal controller \eqref{centralized-optimal-control}.
However, when $E(k)$ is not available for all agents, we will design a distributed controller based an observer  incorporating  agent's historical state information.

To proceed further, the system \eqref{gloabl-closed-error-system} is rewritten as
\begin{align}\label{gloabl-error-system}
E(k+1)
&=\tilde{A}E(k)+\sum_{i=1}^{N}\tilde{B}_{i}\bar{u}_{i}(k),
\\
Y_{i}(k)&=H_{i}E(k), i=1,\cdots,N
\end{align}
where $\tilde{B}_{i}=\begin{bmatrix}
0 & \cdots & B^{T}_{i} & 0\cdots & 0
\end{bmatrix}^{T}$, $Y_{i}(k)$ is measurement, and $H_{i}$  is composed of $0$ and $I_{n}$,  whose specific forms depend on the interaction among agents.

Next, we design a new distributed feedback controller for agent $i$ as
\begin{align}\label{feedback-controller}
\bar{u}_{i}^{*}(k)=K_{i}\hat{E}_{i}(k),
\end{align}
where $\hat{E}_{i}(k)$ is a distributed observer based on the available information of agent $i$ to estimate the global error $E(k)$,
\begin{subequations}\label{observer-design}
\begin{align}
\hat{E}_{1}(k+1)&=\tilde{A}\hat{E}_{1}(k)+\tilde{B}_{1}\bar{u}_{1}^{*}(k)+\tilde{B}_{2}K_{2}\hat{E}_{1}(k)+\cdots
\notag\\
&\quad+\tilde{B}_{N}K_{N}\hat{E}_{1}(k)+ L_{1}(Y_{1}(k)-H_{1}\hat{E}_{1}(k)),
\\
 &\quad \cdots \quad \cdots \quad \cdots
 \notag\\
\hat{E}_{i}(k+1)&=\tilde{A}\hat{E}_{i}(k)+\tilde{B}_{1}K_{1}\hat{E}_{i}(k) +\cdots
\notag\\
&\quad+ \tilde{B}_{i-1}K_{i-1}\hat{E}_{i}(k) +\tilde{B}_{i} \bar{u}_{i}^{*}(k)
\notag\\
&\quad +\tilde{B}_{i+1}K_{i+1}\hat{E}_{i}(k) +\cdots+\tilde{B}_{N}K_{N} \hat{E}_{i}(k)
\notag\\
&\quad +L_{i}(Y_{i}(k)-H_{i}\hat{E}_{i}(k)),
 \\
& \quad \cdots \quad \cdots \quad \cdots
 \notag \\
\hat{E}_{N}(k+1)&=\tilde{A}\hat{E}_{N}(k)+\tilde{B}_{1}K_{1}\hat{E}_{N}(k) +\cdots
\notag\\
&\quad+ \tilde{B}_{N-1}K_{N-1}\hat{E}_{N}(k) +\tilde{B}_{N} \bar{u}_{N}^{*}(k)
\notag\\
&\quad +L_{N}(Y_{N}(k)-H_{N}\hat{E}_{N}(k)),
\end{align}
\end{subequations}
where $K_i=\begin{bmatrix} 0 & \cdots & I & 0 \cdots &0
\end{bmatrix}K$, which is obtained by solving ARE \eqref{algebra-riccati-equation},
 $L_{i}$ is to be determined later to ensure the stability of the observers.

\begin{theorem}\label{main-result-1}
Let Assumption   \ref{graph-assumption} hold.
Consider the global error system \eqref{gloabl-error-system},
and the distributed control laws \eqref{feedback-controller}-\eqref{observer-design}.
 If there exist observer gains $L_{i},i=1,\cdots,N$ such that the matrix
\begin{align}\label{closed-loop-Ac}
A_{c}=\begin{bmatrix}
\Theta_1 & -\tilde{B}_{2}K_{2} &  \cdots & -\tilde{B}_{N}K_{N}\\
-\tilde{B}_{1}K_{1} & \Theta_2 &   \cdots & -\tilde{B}_{N}K_{N} \\
 \vdots &  \vdots & \ddots  & \vdots \\
-\tilde{B}_{1}K_{1} &  \cdots  & -\tilde{B}_{N-1}K_{N-1} &  \Theta_N
\end{bmatrix}
\end{align}
is asymptotically stable, where $\Theta_{i}=\tilde{A}+\tilde{B}K-\tilde{B}_{i}K_{i}-L_{i}H_{i}$.
Then the observers \eqref{observer-design} are asymptotically stable, i.e.,
\begin{align}\label{observer-error-vector}
\lim_{k\to \infty}\|\hat{E}_{i}(k)-E(k)\|=0.
\end{align}
Moreover,  if the Riccati equation \eqref{algebra-riccati-equation} has a positive definite solution $P$,
 under the distributed controllers \eqref{distributed-controller} and \eqref{feedback-controller},
the multi-agent system \eqref{multi-agent-system} and \eqref{leader}  can achieve leader-follower consensus.
 \end{theorem}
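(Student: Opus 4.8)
The plan is to split the argument along the two claims of the theorem: first the convergence of the estimation errors, and then, building on it, the leader-follower consensus. The central object throughout is the estimation error $\tilde{E}_i(k) := \hat{E}_i(k) - E(k)$, whose closed-loop dynamics I will compute explicitly and show to be governed exactly by the matrix $A_c$ in \eqref{closed-loop-Ac}. Before anything else I would record the block identity $\tilde{B}K = \sum_{i=1}^{N}\tilde{B}_i K_i$, which follows at once from $\tilde{B}_i=[\,0\cdots B_i^{T}\cdots 0\,]^{T}$ and $K_i=[\,0\cdots I\cdots 0\,]K$: the $i$-th block row of $\tilde{B}K$ is $B_i$ times the $i$-th block row of $K$, i.e.\ precisely $\tilde{B}_iK_i$. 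This identity is what makes the whole construction close up.

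Using it together with $\bar{u}_i^{*}(k)=K_i\hat{E}_i(k)$, every observer in \eqref{observer-design} collapses to $\hat{E}_i(k+1)=(\tilde{A}+\tilde{B}K-L_iH_i)\hat{E}_i(k)+L_iH_iE(k)$, while the true global error obeys $E(k+1)=\tilde{A}E(k)+\sum_{i}\tilde{B}_iK_i\hat{E}_i(k)$. Subtracting these and inserting $\hat{E}_i=E+\tilde{E}_i$, the terms proportional to $E(k)$ cancel (again by the identity $\sum_j\tilde{B}_jK_j=\tilde{B}K$), leaving $\tilde{E}_i(k+1)=(\tilde{A}+\tilde{B}K-\tilde{B}_iK_i-L_iH_i)\tilde{E}_i(k)-\sum_{j\neq i}\tilde{B}_jK_j\tilde{E}_j(k)$, whose diagonal coefficient is exactly the $\Theta_i$ of the statement. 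Stacking the $\tilde{E}_i$ yields $\tilde{E}(k+1)=A_c\tilde{E}(k)$, so the assumed stability of $A_c$ forces $\tilde{E}(k)\to 0$, which is \eqref{observer-error-vector}.

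For the consensus claim I would write the true closed-loop error dynamics as $E(k+1)=(\tilde{A}+\tilde{B}K)E(k)+\sum_{i}\tilde{B}_iK_i\tilde{E}_i(k)$ and then augment the state to $[\,E(k)^{T}\ \tilde{E}(k)^{T}\,]^{T}$. Since the estimation-error block feeds forward into the $E$-equation but the $E$-block does not feed back into $\tilde{E}$, the augmented transition matrix is block upper triangular with diagonal blocks $\tilde{A}+\tilde{B}K$ and $A_c$. By Lemma 1 the positive definiteness of the ARE solution $P$ makes $\tilde{A}+\tilde{B}K$ Schur stable, and $A_c$ is stable by hypothesis, so the augmented matrix is Schur stable and $E(k)\to 0$. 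Equivalently, one may regard $E$ as a Schur-stable system driven by the vanishing input $\sum_i\tilde{B}_iK_i\tilde{E}_i(k)$ and invoke the standard fact that such a system has a vanishing state.

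The last step, which I expect to be the main obstacle, is turning $E(k)\to 0$ into $\|x_i(k)-x_0(k)\|\to 0$, because $E(k)$ stores only the neighbor errors $e_{ij}(k)$ actually present in the graph. Here I would lean on Assumption 1: the directed spanning tree rooted at the leader supplies, for each follower $i$, a path $0=i_0,i_1,\ldots,i_p=i$ whose consecutive relative errors all appear as blocks of $E(k)$ and hence tend to zero. Telescoping $x_i(k)-x_0(k)=\sum_{\ell=1}^{p}e_{i_\ell i_{\ell-1}}(k)$ along this path then gives $\|x_i(k)-x_0(k)\|\to 0$ for every $i$, completing the proof. The delicate points are bookkeeping the edge orientation so that each summand is genuinely a component of $E(k)$, and treating the leader block $B_0=0$ consistently; both are routine once the path decomposition is fixed.
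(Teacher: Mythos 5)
Your proof follows essentially the same route as the paper's: you derive the coupled dynamics of $E(k)$ and the stacked estimation errors, identify $A_c$ as the transition matrix of the estimation-error subsystem, and exploit the block upper-triangular structure of the augmented system whose diagonal blocks $\tilde{A}+\tilde{B}K$ (Schur stable by the positive definite ARE solution, via Lemma 1) and $A_c$ (stable by hypothesis) force $E(k)\to 0$. If anything, you are more complete than the paper, which disposes of the last step with an appeal to ``LQ control theory,'' whereas you make explicit both the key identity $\tilde{B}K=\sum_{i=1}^{N}\tilde{B}_iK_i$ that makes the error dynamics close up and the spanning-tree telescoping argument needed to convert $E(k)\to 0$ into $\lim_{k\to\infty}\|x_i(k)-x_0(k)\|=0$.
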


\begin{proof}
Define the observer error vector
$\tilde{E}_{i}(k)=E(k)-\hat{E}_{i}(k)$.
Then, combining system  \eqref{gloabl-error-system} with observers \eqref{observer-design}, one obtains
\begin{align}\label{gloabl-error-cloosed-loop-system}
E(k+1)
&=(\tilde{A}+\tilde{B}K)E(k)-\tilde{B}_{1}K_{1}\tilde{E}_{1}(k)
\notag\\
&-\tilde{B}_{2}K_{2}\tilde{E}_{2}(k)-\cdots-\tilde{B}_{N}K_{N}\tilde{E}_{N}(k),
\end{align}
\begin{subequations}\label{observer-error-system}
\begin{align}
\tilde{E}_{1}(k+1)&=(\tilde{A}+\tilde{B}K-\tilde{B}_{1}K_{1}-L_{1}H_{1})\tilde{E}_{1}(k)
\notag\\
&-\tilde{B}_{2}K_{2}\tilde{E}_{2}(k)-\cdots-\tilde{B}_{N}K_{N}\tilde{E}_{N}(k),
\\
& \quad \cdots \quad \cdots \quad \cdots
\notag\\
\tilde{E}_{i}(k+1)&=(\tilde{A}+\tilde{B}K-\tilde{B}_{i}K_{i}-L_{i}H_{i})\tilde{E}_{i}(k)
\notag\\
&\quad -\tilde{B}_{1}K_{1}\tilde{E}_{1}(k)-\cdots-\tilde{B}_{i-1}K_{i-1}\tilde{E}_{i-1}(k)
\notag\\
&\quad-\tilde{B}_{i+1}K_{i+1}\tilde{E}_{i+1}(k)-\cdots-\tilde{B}_{N}K_{N}\tilde{E}_{N}(k),
\\
& \quad \cdots \quad \cdots \quad \cdots
\notag\\
\tilde{E}_{N}(k+1)&=(\tilde{A}+\tilde{B}K-\tilde{B}_{N}K_{N}-L_{N}H_{N})\tilde{E}_{N}(k)
\notag\\
&-\tilde{B}_{1}K_{1}\tilde{E}_{1}(k)-\cdots-\tilde{B}_{N-1}K_{N-1}\tilde{E}_{N-1}(k).
\end{align}
\end{subequations}

According to \eqref{observer-error-system},
we have
\begin{align}\label{obsever-error-system-global}
\tilde{E}(k+1)=A_{c}\tilde{E}(k),
\end{align}
where
$\tilde{E}(k)=\begin{bmatrix}\tilde{E}_{1}^{T}(k), \tilde{E}_{2}^{T}(k),\cdots,\tilde{E}_{N}^{T}(k) \end{bmatrix}^{T}$.
Obviously, if there exist matrices $L_{i}$ such that $A_{c}$ is asymptotically stable, then observer errors $\tilde{E}(k)$ converge to zero as $k\to \infty$,
i.e., Eq. \eqref{observer-error-vector} holds.
Furthermore,
it follows from \eqref{gloabl-error-cloosed-loop-system} and \eqref{observer-error-system} that
\begin{align}\label{closed-loop-error-system}
\begin{bmatrix}
E(k+1)\\
\tilde{E}(k+1)
\end{bmatrix}
=\bar{A}_{c} \begin{bmatrix}
E(k)\\
\tilde{E}(k)
\end{bmatrix}
\end{align}
where $\bar{A}_c=\begin{bmatrix}
\tilde{A}+\tilde{B}K &  \Psi\\
 0 & A_{c}
\end{bmatrix}$
and $\Psi=\begin{bmatrix}
 -\tilde{B}_{1}K_{1}& \cdots & -\tilde{B}_{N}K_{N}
\end{bmatrix}$.
Since $P$ is the positive definite solution to Riccati equation \eqref{algebra-riccati-equation},  then $\tilde{A}+\tilde{B}K$ is stable,
 based on the LQ control theory, the leader-follower consensus of multi-agent system \eqref{multi-agent-system} can be achieved, i.e.,
 $\lim_{k\to \infty} \|x_{i}(k)-x_{0}(k)\|=0$ for $i=1,2,\cdots,N$.

The proof is completed.

\end{proof}

According to the consensus error system \eqref{closed-loop-error-system},
the convergence speed of $N+1$ agents relies on the spectral radius of $\bar{A}_c$, which is determined by $\tilde{A}+\tilde{B}K$ and $A_{c}$.
Since the feedback gain matrix  $K$ has been given by \eqref{feedback-gain-matrix},
 to accelerate the convergence speed, we need to choose the  optimal observer gain matrices $L_{i},i=1,2,\cdots,N$ such that the spectral radius of
$A_{c}$  is as small as possible.
 we can determine $L_{i}$ by solving an   optimization problem.

To this end, denote
\begin{align*}
A_{w}&=\begin{bmatrix}
A_{K}-\tilde{B}_{1}K_{1}& -\tilde{B}_{2}K_{2} &  \cdots & -\tilde{B}_{N}K_{N}\\
-\tilde{B}_{1}K_{1} & A_{K}-\tilde{B}_{2}K_{2} &   \cdots & -\tilde{B}_{N}K_{N} \\
 \vdots &  \vdots & \ddots  & \vdots \\
-\tilde{B}_{1}K_{1} &  \cdots  & \cdots&  A_{K}-\tilde{B}_{N}K_{N}
\end{bmatrix},
\notag\\
A_{K}&=\tilde{A}+\tilde{B}K, \quad
H=diag
\{H_{1}, H_{2}, \cdots, H_{N}\},
\notag\\
P_{c}&=diag
\{P_{c1}, P_{c2}, \cdots, P_{cN}\},\quad
Y=diag
\{Y_{1}, Y_{2}, \cdots, Y_{N}\},
\notag\\
L&=diag
\{L_{1}, L_{2}, \cdots, L_{N}\}.
\end{align*}
\begin{lemma}\label{optimization-lemma}
If there exist matrices $P_{c},Y,S$ and parameter $\alpha$ such that
\begin{subequations}\label{Lmi}
\begin{align}
S=S^{T}>0,
\\
P_{c}=P_{c}^{T}>0,
\\
\alpha I-P_{c}>0,
\\
\begin{bmatrix}
P_{c}-S & (P_{c}A_{w}-YH)^{T}\\
P_{c}A_{w}-YH & P_{c}
\end{bmatrix}>0,\label{lmi-solving}
\end{align}
\end{subequations}
and select the observer gains $L_{i}=P_{ci}^{-1}Y_{i}$.
In this case, the observer gains  are solved by the following optimization problem
\begin{align}\label{choose-observer-gain-L}
\min_{L_{i},P_{c},S} \alpha \quad  \quad \mbox{s.t.} \quad \eqref{Lmi}
\end{align}
\end{lemma}

\begin{proof}
The observer error system \eqref{obsever-error-system-global} is rewritten as
\begin{align}\label{observer-error-system-jizhongshi}
\tilde{E}(k+1)=A_{c}\tilde{E}(k)=(A_{w}-LH)\tilde{E}(k).
\end{align}
\eqref{observer-error-system-jizhongshi} is asymptotically stable if
 there exist a symmetric positive definite matrix $P_{c}$ satisfying the Lyapunov inequality
\begin{align}
(A_{w}-LH)^{T}P_{c}(A_{w}-LH)-P_{c}<0.
\end{align}
Therefore, there exists a positive definite matrix $S^{T}=S>0$ such that
\begin{align}\label{lmi-s}
(A_{w}-LH)^{T}P_{c}(A_{w}-LH)-P_{c}<-S.
\end{align}
Taking $L=P_{c}^{-1}Y$, the inequality \eqref{lmi-s} is equivalent to \eqref{lmi-solving}.
Now, we consider the cost function
\begin{align}\label{observer-cost-function}
J_{L}=\sum_{k=0}^{T_{N}} \tilde{E}(k)^{T}S\tilde{E}(k)
\end{align}
By taking into consideration of \eqref{lmi-s}, we can derive
\begin{align*}
&\tilde{E}(k+1)^{T}P_{c}\tilde{E}(k+1)-\tilde{E}(k)^{T}P_{c}\tilde{E}(k)
\notag\\
&=\tilde{E}(k)^{T}(A_{c}^{T}P_{c}A_{c}-P_{c})\tilde{E}(k)
 <-\tilde{E}(k)^{T}S\tilde{E}(k).
\end{align*}
Then,
\begin{align*}
J_{L}\leq \tilde{E}^{T}(0)P_{c}\tilde{E}(0)-\tilde{E}^{T}(T_{N})P_{c}\tilde{E}(T_{N})
 \leq \tilde{E}^{T}(0)P_{c}\tilde{E}(0)
\end{align*}
with $T_{N}\to \infty$, $\tilde{E}^{T}(T_{N})P_{c}\tilde{E}(T_{N})\to 0$. This inequality implies that
one can minimize the cost function \eqref{observer-cost-function} by minimizing the bound $E^{T}(0)P_{c}E(0)$.
Since $\tilde{E}^{T}(0)P_{c}\tilde{E}(0)\leq \|\tilde{E}(0)\|^{2}\|P_{c}\|\leq M_{0}^{2}\|P_{c}\|$, where $M_{0}$ is the upper bound of the initial value $ \tilde{E}(0)$.
Therefore, the optimal observer gain $L$ is derived by minimizing the maximum eigenvalue of $P_{c}$,
i.e., solving the minimization problem \eqref{choose-observer-gain-L}.
\end{proof}

\begin{remark}
Lemma \ref{optimization-lemma} provides an approach to choose gain matrices $L_{i}$ such that the spectral radius of $A_{c}$  is as small as possible.
Due to the constraints on the diagonal structure of $P_{c}$ and $Y$,  the derived observer gain $L_{i}$ is a suboptimal solution.
\end{remark}

Next, we will derive the cost difference between the proposed distributed controller \eqref{feedback-controller},
and the centralized optimal control \eqref{centralized-optimal-control},
and then analyze the asymptotical optimal property of the corresponding cost function.

For the convenience of analysis, denote that
\begin{align*}
\mathcal{M}_{1}&=(\tilde{A}+\tilde{B}K)^{T}P\Psi
-\begin{bmatrix}
K_{1}^{T}R_{1}K_{1} &  \cdots & K_{N}^{T}R_{N}K_{N}
\end{bmatrix},
\notag\\
\mathcal{M}_{2}&=\begin{bmatrix}
K_{1}^{T}R_{1}K_{1} & \cdots & 0\\
0  & \ddots & 0\\
\vdots &  \vdots & \vdots\\
0  &\cdots & K_{N}^{T}R_{N}K_{N}
\end{bmatrix}+\Psi^{T}P\Psi.
\end{align*}

\begin{theorem}\label{asymptotically-optimal-theorem}
Under the proposed distributed controllers \eqref{feedback-controller} and \eqref{observer-design} with $L_{i},i=1,2,\cdots,N$
such that the matrix $A_c$ in  \eqref{closed-loop-Ac}is asymptotically stable,
the corresponding cost function  is given by
\begin{align}\label{cost-function-distributed}
&J^{\star}(s,\infty)
\notag\\
&=E^{T}(s)PE(s)+\sum_{k=s}^{\infty}\begin{bmatrix}
E(k)\\
\tilde{E}(k)
\end{bmatrix}^{T}
\begin{bmatrix}
0 & \mathcal{M}_{1}\\
\mathcal{M}_{1}^{T} & \mathcal{M}_{2}
\end{bmatrix}
\begin{bmatrix}
E(k)\\
\tilde{E}(k)
\end{bmatrix}
\end{align}
Moreover, the cost difference between the cost function \eqref{cost-function-distributed} and the cost under the centralized optimal control
is given by
\begin{align}\label{cost-difference}
 \Delta J(s,\infty)&=J^{\star}(s,\infty)-J^{*}(s,\infty)
 \notag\\
 &=\sum_{k=s}^{\infty}\begin{bmatrix}
E(k)\\
\tilde{E}(k)
\end{bmatrix}^{T}
\begin{bmatrix}
0 & \mathcal{M}_{1}\\
\mathcal{M}_{1}^{T} & \mathcal{M}_{2}
\end{bmatrix}
\begin{bmatrix}
E(k)\\
\tilde{E}(k)
\end{bmatrix}.
\end{align}
In particular, the optimal cost function difference will approach to zero as $s$ is sufficiently large.
That is to say, the proposed  consensus controller can achieve the optimal cost (asymptotically).
\end{theorem}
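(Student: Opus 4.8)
The plan is to rewrite the running cost of the closed-loop system as a perfect telescoping difference of the Lyapunov function $E^{T}(k)PE(k)$ plus a residual quadratic form in $\zeta(k)=[E^{T}(k),\ \tilde{E}^{T}(k)]^{T}$ that is exactly the $\mathcal{M}$-block appearing in \eqref{cost-function-distributed}. First I would substitute the actually-applied control $\bar{u}_{i}^{*}(k)=K_{i}\hat{E}_{i}(k)=K_{i}(E(k)-\tilde{E}_{i}(k))$ into the reformulated cost \eqref{cost-function-error}. Using the block structure $K_{i}=[\,0\ \cdots\ I\ \cdots\ 0\,]K$ together with $R=\mathrm{diag}\{R_{i}\}$, one obtains $\sum_{i}K_{i}^{T}R_{i}K_{i}=K^{T}RK$, so the input cost expands as $E^{T}K^{T}RKE-E^{T}F\tilde{E}-\tilde{E}^{T}F^{T}E+\tilde{E}^{T}D\tilde{E}$, where $F=[\,K_{1}^{T}R_{1}K_{1}\ \cdots\ K_{N}^{T}R_{N}K_{N}\,]$ and $D=\mathrm{diag}\{K_{i}^{T}R_{i}K_{i}\}$. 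The instantaneous cost thus becomes $c(k)=E^{T}(\mathcal{Q}+K^{T}RK)E-E^{T}F\tilde{E}-\tilde{E}^{T}F^{T}E+\tilde{E}^{T}D\tilde{E}$.

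The second step is the Riccati identity. From \eqref{algebra-riccati-equation} and \eqref{feedback-gain-matrix} I would establish the standard relation $(\tilde{A}+\tilde{B}K)^{T}P(\tilde{A}+\tilde{B}K)+\mathcal{Q}+K^{T}RK=P$, equivalently $E^{T}(\mathcal{Q}+K^{T}RK)E=E^{T}PE-E^{T}(\tilde{A}+\tilde{B}K)^{T}P(\tilde{A}+\tilde{B}K)E$. Setting $W(k)=E^{T}(k)PE(k)$ and using the $E$-dynamics $E(k+1)=(\tilde{A}+\tilde{B}K)E(k)+\Psi\tilde{E}(k)$ read off from \eqref{closed-loop-error-system}, I would compute $W(k)-W(k+1)$ and subtract it from $c(k)$. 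The $E^{T}PE$ term and the $(\tilde{A}+\tilde{B}K)$-quadratic term cancel by the Riccati identity, and the surviving cross- and $\tilde{E}$-terms collapse into $E^{T}\big((\tilde{A}+\tilde{B}K)^{T}P\Psi-F\big)\tilde{E}$ plus its transpose plus $\tilde{E}^{T}(\Psi^{T}P\Psi+D)\tilde{E}$, which is precisely $E^{T}\mathcal{M}_{1}\tilde{E}+\tilde{E}^{T}\mathcal{M}_{1}^{T}E+\tilde{E}^{T}\mathcal{M}_{2}\tilde{E}$ by the definitions of $\mathcal{M}_{1}$ and $\mathcal{M}_{2}$. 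Hence $c(k)=W(k)-W(k+1)+\zeta^{T}(k)\,[\,0,\ \mathcal{M}_{1};\ \mathcal{M}_{1}^{T},\ \mathcal{M}_{2}\,]\,\zeta(k)$.

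Summing $c(k)$ from $s$ to $\infty$ telescopes the first part to $W(s)-\lim_{k\to\infty}W(k)$. Since $P$ positive definite makes $\tilde{A}+\tilde{B}K$ stable (Lemma 1) and the chosen $L_{i}$ make $A_{c}$ stable (Theorem 1), the block-triangular matrix $\bar{A}_{c}$ is stable, so $E(k)\to0$ and $W(k)\to0$; the telescoped term equals $E^{T}(s)PE(s)$, which yields \eqref{cost-function-distributed}. Subtracting the centralized optimal cost $J^{*}(s,\infty)=E^{T}(s)PE(s)$ then gives \eqref{cost-difference} immediately.

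For the asymptotic optimality I would invoke stability of $\bar{A}_{c}$ once more: $\zeta(k)=\bar{A}_{c}^{\,k}\zeta(0)$ decays as $\|\zeta(k)\|\le c\rho^{k}\|\zeta(0)\|$ for some $\rho\in(0,1)$, so the residual sum in \eqref{cost-difference} is bounded by $\mu\sum_{k=s}^{\infty}\|\zeta(k)\|^{2}\le \mu c^{2}\|\zeta(0)\|^{2}\rho^{2s}/(1-\rho^{2})$, where $\mu$ bounds the spectral norm of the $\mathcal{M}$-block. This tends to $0$ as $s\to\infty$, establishing $\Delta J(s,\infty)\to0$ and hence asymptotic optimality. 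I expect the main obstacle to be the exact algebraic matching of the residual to $\mathcal{M}_{1}$ and $\mathcal{M}_{2}$ — in particular recognizing $\sum_{i}K_{i}^{T}R_{i}K_{i}=K^{T}RK$ and correctly tracking the block-row placement of $F$ versus the block-diagonal placement of $D$ in the cross terms — rather than the convergence argument, which is routine once stability of $\bar{A}_{c}$ is available.
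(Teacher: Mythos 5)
Your proposal is correct and follows essentially the same route as the paper: the Riccati completion-of-squares identity applied to the Lyapunov difference $E^{T}(k)PE(k)-E^{T}(k+1)PE(k+1)$ along the closed-loop dynamics \eqref{gloabl-error-cloosed-loop-system}, telescoping the sum, using stability (Theorem \ref{main-result-1}) to eliminate the terminal term, and subtracting $J^{*}(s,\infty)=E^{T}(s)PE(s)$. You actually supply more detail than the paper at two points it glosses over — the explicit expansion of the input cost into the $F$/$D$ blocks matching $\mathcal{M}_{1}$ and $\mathcal{M}_{2}$, and the geometric-decay bound for $\Delta J(s,\infty)\to 0$, which the paper delegates to a citation.
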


\begin{proof}
According to \eqref{algebra-riccati-equation} and \eqref{gloabl-error-cloosed-loop-system},
  we have
  \begin{align*}
  &E^{T}(k)PE(k)-E^{T}(k+1)PE(k+1)
  \notag\\
  &=E^{T}(k)(\mathcal{Q}+K^{T}RK)E(k)
\notag\\
  &\quad -\tilde{E}^{T}(k)\Psi^{T}P (\tilde{A}+\tilde{B}K)E(k)
  \notag\\
  &\quad -E^{T}(k)(\tilde{A}+\tilde{B}K)^{T}P\Psi\tilde{E}(k)
  -\tilde{E}^{T}(k)\Psi^{T}P\Psi \tilde{E}(k)
  \end{align*}
  Based on the cost function \eqref{cost-function} under the centralized optimal control,
   by performing summation on $k$ from $s$ to $\infty$ and applying algebraic calculations yields
  \begin{align}
  &E^{T}(s)PE(s)-E^{T}(\infty)PE(\infty)
  \notag\\
  &=J(s,\infty)-\sum_{k=s}^{\infty}\begin{bmatrix}
E(k)\\
\tilde{E}(k)
\end{bmatrix}^{T}
\begin{bmatrix}
0 & \mathcal{M}_{1}\\
\mathcal{M}_{1}^{T} & \mathcal{M}_{2}
\end{bmatrix}
\begin{bmatrix}
E(k)\\
\tilde{E}(k)
\end{bmatrix}
  \end{align}
 It follows from Theorem \ref{main-result-1} that
 $\lim_{k\to \infty} E^{T}(k)PE(k)=0$.
 Therefore,
 the corresponding optimal cost function $J^{\star}(s,\infty)$ under the  proposed distributed observer-based controller \eqref{feedback-controller}
  is given by \eqref{cost-function-distributed}.
In line with \eqref{optimal-cost-function},
 the optimal cost difference \eqref{cost-difference} holds.
 Furthermore, we can analyze the asymptotically optimality of the distributed controller \eqref{feedback-controller},
 which is similar to that in \cite{Zhang2023}.
 To avoid repetition, we omit the detail in this paper.
This proof is completed.
\end{proof}

\subsection{Comparison with traditional consensus algorithms}
Firstly, the consensus with the proposed controller has a faster convergence speed than the traditional consensus algorithms.

In fact,
from the closed-loop system \eqref{closed-loop-error-system},
  one has
\begin{align}\label{norm-estimator-error}
\left\|\begin{bmatrix}
E(k)\\
\tilde{E}(k)
\end{bmatrix}\right\|\leq \rho(\bar{A}_{c})\left\|\begin{bmatrix}
E(k-1)\\
\tilde{E}(k-1)
\end{bmatrix}\right\|,
\end{align}
where $\rho(\bar{A}_{c})$ is the larger spectral radius of  $\tilde{A}+\tilde{B}K$ and $A_{c}$.
In particularly,
 $\tilde{A}+\tilde{B}K$ is the closed-loop system matrix obtained by the optimal feedback control \eqref{feedback-gain-matrix},
 that is,
 $E(k+1)=(\tilde{A}+\tilde{B}K)E(k)$ while $E(k+1)^{T} \mathcal{Q}E(k+1)$ is minimized as in \eqref{cost-function},
so the modulus of the eigenvalues for $\tilde{A}+\tilde{B}K$ is minimized in certain sense.
Besides, based on  the optimization in Lemma \ref{optimization-lemma}, we can appropriately select $L_{i}$
such that the upper bound of the spectral radius $\rho( A_{c})$ is as small as possible.
From these perspectives, $\rho(\bar{A}_{c})$ is made more to be small. This is in comparison with the conventional consensus algorithms where
 the maximum eigenvalue of the matrix $\bar{A}_{c}$ is not minimized and is determined by the eigenvalues of the Laplacian matrix $\mathcal{L}$.
Therefore, it can be expected that the proposed approach can achieve a faster convergence than the conventional algorithms
 as demonstrated in the simulation examples in Section \ref{sec:example}.


Second, the cost difference $\Delta J(s,\infty)$ between the new distributed controller \eqref{feedback-controller} and the centralized optimal control \eqref{centralized-optimal-control}
is provided in Theorem \ref{asymptotically-optimal-theorem},
and it is equal to zero as $s\to\infty$.
 That is to say,  the corresponding cost function under the proposed distributed controllers \eqref{feedback-controller} is asymptotically optimal.

\begin{remark}
To break through the heterogeneity's barriers, we pre-design a feeforward controller \eqref{feedforword-controller} or \eqref{feedforword-controller-case2},
and then the remaining distributed feedback controller design
 can be directly reduced to the homogeneous case \cite{Zhang2023},
 which indicates the current consensus controller can deal with both heterogeneous system and homogeneous systems.
So it is totally different from the relevant research works \cite{Huang2017,Zhang2020,Wieland2011}.
 In the proposed approach, the solvability assumption of output regulation equations
and an additional distributed observer to estimate the leader's state have been removed.
A kind of novel distributed controllers based on a distributed  observer involving local information are developed
to solve the consensus problem of heterogeneous systems \eqref{multi-agent-system} and \eqref{leader}.
Moreover, the feedback gain matrices  $K_{i}$ are obtained by solving Riccati equations,
which do not require the calculation of eigenvalues for communication topology.
\end{remark}

\subsection{Output consensus of heterogeneous multi-agent systems}\label{sec:output-consensus}

Different from the cases discussed in Section \ref{sec:state-consensus}, where the state dimension of each subsystem is identical,
we further study the output consensus of heterogeneous systems with different state space dimensions.

Consider the heterogeneous multi-agent system consisting of $N$ followers, whose dynamics are given by
\begin{align}\label{multi-agent-system-heterogeneous}
x_{i}(k+1)&=A_{i}x_{i}(k)+B_{i}u_{i}(k),
\notag\\
y_{i}(k)&=C_{i}x_{i}(k), i=1,2,\cdots,N
\end{align}
where $x_{i}(k)\in \mathbb{R}^{n_{i}}$, $u_{i}(k)\in \mathbb{R}^{m_{i}}$ and $y_{i}\in \mathbb{R}^{q}$ are the state, the input and the output of  agent $i$.
$A_{i}\in \mathbb{R}^{n_{i}\times n_{i}},B_{i}\in \mathbb{R}^{n_{i}\times m_{i}}$ and $C_{i}\in \mathbb{R}^{q\times n_{i}}$ are the coefficient matrices.

The dynamic of the leader is given by
\begin{align}\label{leader-2}
x_{0}(k+1)&=A_{0}x_{0}(k),
\notag\\
y_{0}(k)&=C_{0}x_{0}(k),
\end{align}
where $x_{0}(k)\in \mathbb{R}^{p\times p}$ and $y_{0}(k)\in \mathbb{R}^{q}$ is the state of the leader,
$A_{0}\in \mathbb{R}^{p\times p}$ and $C_{0}\in \mathbb{R}^{q\times p}$ are the coefficient matrices.

The cost function of multi-agent systems \eqref{multi-agent-system-heterogeneous} is given by
\begin{align}\label{cost-function-heterogeneous}
J(s,\infty)&=\sum_{k=s}^{\infty}\left(\sum_{i=1}^{N} \sum_{j\in \mathcal{N}_{i}}(y_{i}(k)-y_{j}(k))^{T}Q(y_{i}(k)-y_{j}(k))
\right.
\notag\\
&\quad \left.+ \sum_{i=1}^{N} u_{i}^{T}(k)R_{i}u_{i}(k)\right)
\end{align}
where $Q\geq 0$ and $R_{i}>0$ are weighting matrices.

Because the state space dimension of each subsystem is not identical,
the above state consensus is meaningless.
We need to design a distributed controller $u_{i}(k)$
to ensure that the outputs of the followers synchronize to the leader's output,
\begin{align*}
\lim_{k\to\infty}\|y_{i}(k)-y_{0}(k)\|=0,\quad i=1,2,\cdots, N.
\end{align*}
To this end, we define relative output error variables between agent $i$ and $j$ as:
\begin{align}
\varepsilon_{ij}(k)=y_{i}(k)-y_{j}(k), \quad  j\in \mathcal{N}_{i}.
\end{align}
Then, the dynamics of output error system are
\begin{align}\label{output-error-system}
\varepsilon_{ij}(k+1)&=\varepsilon_{ij}(k)+C_{i}(A_{i}-I_{n_{i}})x_{i}(k)+ C_{i}B_{i}u_{i}(k)
\notag\\
&\quad-C_{j}(A_{j}-I_{n_{j}})x_{j}(k)-C_{j}B_{j}u_{j}(k)
\end{align}
with $B_{0}=0, u_{0}(k)=0$ and $j\in \mathcal{N}_{i}$.

Similarly, we design the controller $u_{i}(k)$ as
\begin{align}\label{distributed-controller-2}
u_{i}(k)=\tilde{u}_{i}(k)+\bar{u}_{i}(k).
\end{align}
Denote that $A_{ij}=\begin{bmatrix}
-C_{i}(A_{i}-I_{i}) & C_{j}(A_{j}-I_{j})& C_{j}B_{j}
\end{bmatrix}$.
When $C_{i}B_{i}$ is an invertible matrix, then
the feedforward controller $\tilde{u}_{i}(k)$ is designed as:
\begin{align}\label{feedforword-controller-2-case1}
\tilde{u}_{i}(k)&=-B_{i}^{-1}(A_{i}-I_{n_{i}})x_{i}(k)+(C_{i}B_{i})^{-1}(C_{j}B_{j})u_{j}(k)
\notag\\
&\quad +(C_{i}B_{i})^{-1}C_{j}(A_{j}-I_{n_{j}})x_{j}(k).
\end{align}
When $Rang(A_{ij})\subseteq Rang(C_{i}B_{i})$, then
\begin{align}\label{feedforword-controller-2-case2}
\tilde{u}_{i}(k)&=-B_{i}^{\dagger}(A_{i}-I_{n_{i}})x_{i}(k)+(C_{i}B_{i})^{\dagger}(C_{j}B_{j})u_{j}(k)
\notag\\
&\quad +(C_{i}B_{i})^{\dagger}C_{j}(A_{j}-I_{n_{j}})x_{j}(k).
\end{align}
Observe that the distributed feedforward controller $\tilde{u}_{i}(k)$ is designed by using only the state information of itself and its neighbor agent $j$ and the control input of
neighbor agent $j$.
Under the feedforward controller \eqref{feedforword-controller-2-case1} or \eqref{feedforword-controller-2-case2},
the output error system \eqref{output-error-system} is converted into a standard linear system
\begin{align}\label{standard-output-linear system}
\varepsilon_{ij}(k+1)=\varepsilon_{ij}(k)+C_{i}B_{i}\bar{u}_{i}(k).
\end{align}

Since the output consensus for heterogeneous multi-agent systems  \eqref{multi-agent-system-heterogeneous} and \eqref{leader-2} is equivalent to the stability of the output error system \eqref{output-error-system}. Similar Section\ref{sec:state-consensus}, we still select edges along a spanning tree,
the corresponding global form is
\begin{align}\label{global-output-error-system}
\Upsilon(k+1)=\mathcal{A}\Upsilon(k)+\mathcal{B}\bar{u}(k),
\end{align}
where $\Upsilon(k)=\begin{bmatrix}
\varepsilon_{10}^{T}(k) & \cdots & \varepsilon_{ij}^{T}(k) \cdots & \varepsilon_{N,N-1}^{T}(k)
\end{bmatrix}^{T}$,
$\bar{u}(k)=\begin{bmatrix}
\bar{u}_{1}^{T}(k) & \bar{u}_{2}^{T}(k) \cdots & \bar{u}_{N}^{T}(k)
\end{bmatrix}^{T}$,
$\mathcal{A}=I_{N}\otimes I_{q}$,
and $\mathcal{B}=diag\{C_{1}B_{1},C_{2}B_{2},\cdots,C_{N}B_{N}\}$.

Then, the cost function \eqref{cost-function-heterogeneous} is reformulated as
\begin{align}\label{cost-function-error-heterogeneous}
J(s,\infty)&=\sum_{k=s}^{\infty}\left(\sum_{i=1}^{N} \sum_{j\in \mathcal{N}_{i}}\varepsilon_{ij}^{T}(k)Q\varepsilon_{ij}(k)+
 \sum_{i=1}^{N} \bar{u}_{i}^{T}(k)R_{i}\bar{u}_{i}(k)\right)
\notag\\
&=\sum_{k=s}^{\infty}[\Upsilon^{T}(k)\mathcal{Q}\Upsilon(k)+\bar{u}^{T}(k)R\bar{u}(k)],
\end{align}
with $\mathcal{Q}=I_{N}\otimes Q$ and $R=diag\{R_{1},\cdots, R_{N}\}$.

Next, we will focus on designing the feedback control $\bar{u}_{i}(k)$ by minimizing the performance \eqref{cost-function-error-heterogeneous}.

We rewrite the system \eqref{global-output-error-system} as
\begin{align}\label{gloabl-error-system-heterogeneous}
\Upsilon(k+1)
&=\mathcal{A}\Upsilon(k)+\sum_{i=1}^{N}\mathcal{B}_{i}\bar{u}_{i}(k),
\\
\mathcal{Y}_{i}(k)&=F_{i}\Upsilon(k), i=1,\cdots,N
\end{align}
where $\mathcal{B}_{i}=\begin{bmatrix}
0 & \cdots & (C_{i}B_{i})^{T} & 0\cdots & 0
\end{bmatrix}^{T}$, $\mathcal{Y}_{i}(k)$ is measurement,
and $F_{i}$  is composed of $0$ and $I_{n}$,  whose specific forms depend on the interaction among agents.

Next, we design a new distributed feedback controller as
\begin{align}\label{feedback-controller-2}
\bar{u}_{i}^{*}(k)=\mathcal{K}_{i}\hat{\Upsilon}_{i}(k),
\end{align}
where $\hat{\Upsilon}_{i}(k)$ is a distributed observer based on the available information of agent $i$ to estimate the global error $\Upsilon(k)$,
\begin{subequations}\label{observer-design-heterogeneous}
\begin{align}
\hat{\Upsilon}_{1}(k+1)&=\mathcal{A}\hat{\Upsilon}_{1}(k)+\mathcal{B}_{1}\bar{u}_{1}^{*}(k)+\mathcal{B}_{2}\mathcal{K}_{2}\hat{\Upsilon}_{1}(k)+\cdots
\notag\\
&\quad+\mathcal{B}_{N}\mathcal{K}_{N}\hat{\Upsilon}_{1}(k)+ \Phi_{1}(\mathcal{Y}_{1}(k)-F_{1}\hat{\Upsilon}_{1}(k)),
\\
 &\quad \cdots \quad \cdots \quad \cdots
 \notag\\
\hat{\Upsilon}_{i}(k+1)&=\mathcal{A}\hat{\Upsilon}_{i}(k)+\mathcal{B}_{1}\mathcal{K}_{1}\hat{\Upsilon}_{i}(k) +\cdots
\notag\\
&\quad+ \mathcal{B}_{i-1}\mathcal{K}_{i-1}\hat{\Upsilon}_{i}(k) +\mathcal{B}_{i} \bar{u}_{i}^{*}(k)
\notag\\
&\quad +\mathcal{B}_{i+1}\mathcal{K}_{i+1}\hat{\Upsilon}_{i}(k) +\cdots+\mathcal{B}_{N}\mathcal{K}_{N} \hat{\Upsilon}_{i}(k)
\notag\\
&\quad +\Phi_{i}(\mathcal{Y}_{i}(k)-F_{i}\hat{\Upsilon}_{i}(k)),
 \\
& \quad \cdots \quad \cdots \quad \cdots
 \notag \\
\hat{\Upsilon}_{N}(k+1)&=\mathcal{A}\hat{\Upsilon}_{N}(k)+\mathcal{B}_{1}\mathcal{K}_{1}\hat{\Upsilon}_{N}(k) +\cdots
\notag\\
&\quad+ \mathcal{B}_{N-1}\mathcal{K}_{N-1}\hat{\Upsilon}_{N}(k) +\mathcal{B}_{N} \bar{u}_{N}^{*}(k)
\notag\\
&\quad +\Phi_{N}(\mathcal{Y}_{N}(k)-F_{N}\hat{\Upsilon}_{N}(k)),
\end{align}
\end{subequations}
where $\mathcal{K}_i=\begin{bmatrix} 0 & \cdots & I & 0 \cdots &0
\end{bmatrix}\mathcal{K}$, which is obtained by solving the ARE
\begin{align}\label{algebra-riccati-equation-heterogeneous}
\mathcal{P}=\mathcal{A} ^{T}\mathcal{P}\mathcal{A}+\mathcal{Q}-\mathcal{A}^{T}P\mathcal{B}(R+ \mathcal{B}^{T}\mathcal{P}\mathcal{B})^{-1}\mathcal{B}^{T}\mathcal{P}\mathcal{A}
\end{align}
and  $\Phi_{i}$ is observer gain matrix to be determined later.

\begin{theorem}\label{main-result-2}
Let Assumption \ref{graph-assumption} hold, and consider the global error system \eqref{gloabl-error-system-heterogeneous}.
Under the distributed control laws \eqref{feedback-controller-2} with \eqref{observer-design-heterogeneous}.
 if there exist observer gains $\Phi_{i},i=1,\cdots,N$ such that the matrix
\begin{align}
H_{c}=\begin{bmatrix}
\Sigma_1 & -\mathcal{B}_{2}\mathcal{K}_{2} &  \cdots & -\mathcal{B}_{N}\mathcal{K}_{N}\\
-\mathcal{B}_{1}\mathcal{K}_{1} & \Sigma_2 &   \cdots & -\mathcal{B}_{N}\mathcal{K}_{N} \\
 \vdots &  \vdots & \ddots  & \vdots \\
-\mathcal{B}_{1}\mathcal{K}_{1}  &  \cdots  &-\mathcal{B}_{N-1}\mathcal{K}_{N-1}  &  \Sigma_N
\end{bmatrix}
\end{align}
is stable, where $\Sigma_{i}=\mathcal{A}+\mathcal{B}\mathcal{K}-\mathcal{B}_{i}\mathcal{K}_{i}-\Phi_{i}F_{i}$.
Then the observers \eqref{observer-design-heterogeneous} are stable under the controller \eqref{feedback-controller-2}, i.e.,
\begin{align}\label{observer-error-vector-1}
\lim_{k\to \infty}\|\hat{\Upsilon}_{i}(k)-\Upsilon(k)\|=0.
\end{align}
Moreover,  if the ARE \eqref{algebra-riccati-equation-heterogeneous} has a positive definite solution $\mathcal{P}$,
 under the distributed controllers  \eqref{feedforword-controller-2-case1} or \eqref{feedforword-controller-2-case2}, \eqref{feedback-controller-2} and \eqref{observer-design-heterogeneous},
the heterogeneous multi-agent systems \eqref{multi-agent-system-heterogeneous} and \eqref{leader-2}  can achieve output consensus.
Meanwhile, the corresponding cost function is also asymptotically optimal.
 \end{theorem}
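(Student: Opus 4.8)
The plan is to mirror the argument used for Theorem~\ref{main-result-1}, replacing every state-error quantity by its output-error counterpart. First I would introduce the observer error vectors $\tilde{\Upsilon}_i(k) = \Upsilon(k) - \hat{\Upsilon}_i(k)$ for $i = 1,\ldots,N$. Substituting the feedback law \eqref{feedback-controller-2} into the global error system \eqref{gloabl-error-system-heterogeneous} and writing $\bar{u}_i^{*}(k) = \mathcal{K}_i\hat{\Upsilon}_i(k) = \mathcal{K}_i(\Upsilon(k) - \tilde{\Upsilon}_i(k))$, the closed-loop dynamics of $\Upsilon(k)$ collapse to
\begin{align*}
\Upsilon(k+1) = (\mathcal{A} + \mathcal{B}\mathcal{K})\Upsilon(k) - \sum_{i=1}^{N} \mathcal{B}_i \mathcal{K}_i \tilde{\Upsilon}_i(k),
\end{align*}
which is the exact analogue of \eqref{gloabl-error-cloosed-loop-system}.

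Next I would subtract each observer recursion in \eqref{observer-design-heterogeneous} from the true dynamics \eqref{gloabl-error-system-heterogeneous}. The measurement injection term $\Phi_i(\mathcal{Y}_i(k)-F_i\hat{\Upsilon}_i(k)) = \Phi_i F_i \tilde{\Upsilon}_i(k)$ and the cancellation of $\mathcal{A}$ and the propagated input terms produce a coupled set of error recursions in which the $i$-th equation has diagonal block $\Sigma_i = \mathcal{A} + \mathcal{B}\mathcal{K} - \mathcal{B}_i\mathcal{K}_i - \Phi_i F_i$ and off-diagonal coupling $-\mathcal{B}_j\mathcal{K}_j$. Stacking $\tilde{\Upsilon}(k) = [\tilde{\Upsilon}_1^{T}(k),\ldots,\tilde{\Upsilon}_N^{T}(k)]^{T}$, these recursions read $\tilde{\Upsilon}(k+1) = H_c\tilde{\Upsilon}(k)$ with $H_c$ precisely the matrix in the statement. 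Since $H_c$ is assumed stable, $\tilde{\Upsilon}(k)\to 0$, establishing \eqref{observer-error-vector-1}.

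To conclude output consensus I would stack $\Upsilon(k)$ and $\tilde{\Upsilon}(k)$ into an augmented system whose transition matrix is block upper-triangular,
\begin{align*}
\bar{H}_c = \begin{bmatrix} \mathcal{A} + \mathcal{B}\mathcal{K} & \Xi \\ 0 & H_c \end{bmatrix}, \quad \Xi = \begin{bmatrix} -\mathcal{B}_1\mathcal{K}_1 & \cdots & -\mathcal{B}_N\mathcal{K}_N \end{bmatrix},
\end{align*}
whose spectrum is the union of the spectra of $\mathcal{A} + \mathcal{B}\mathcal{K}$ and $H_c$. When \eqref{algebra-riccati-equation-heterogeneous} admits a positive definite solution $\mathcal{P}$, the standard LQ result guarantees that $\mathcal{A} + \mathcal{B}\mathcal{K}$ is stable; combined with stability of $H_c$, the whole matrix $\bar{H}_c$ is stable, so $\Upsilon(k)\to 0$, i.e. $\|y_i(k)-y_0(k)\|\to 0$. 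The asymptotic optimality of the associated cost then follows by repeating the summation-by-parts identity of Theorem~\ref{asymptotically-optimal-theorem} with $(\Upsilon,\tilde{\Upsilon},\mathcal{P},\mathcal{K})$ in place of $(E,\tilde{E},P,K)$.

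The main obstacle is subtler than in the state-consensus case: here $\mathcal{A} = I_N\otimes I_q$ has every eigenvalue on the unit circle, so the stability of $\mathcal{A} + \mathcal{B}\mathcal{K}$ is not automatic and forces one to verify that $(\mathcal{A},\mathcal{B})$ is stabilizable before invoking the Riccati theory. Because all modes of $\mathcal{A}$ are marginally unstable, stabilizability is equivalent to full controllability, which holds exactly when each block $C_iB_i$ is invertible, the very condition already imposed to construct the feedforward law \eqref{feedforword-controller-2}. Making this link explicit, and thereby certifying the existence of the positive definite $\mathcal{P}$, is the one place where the output case genuinely departs from the proof of Theorem~\ref{main-result-1}.
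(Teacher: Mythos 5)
Your proposal is correct and takes essentially the same route as the paper: the paper's own proof of Theorem~\ref{main-result-2} is a single sentence deferring to Theorems~\ref{main-result-1} and~\ref{asymptotically-optimal-theorem}, and your substitution of $(\Upsilon,\tilde{\Upsilon},\mathcal{P},\mathcal{K},H_c)$ for $(E,\tilde{E},P,K,A_c)$ supplies exactly the omitted details (closed-loop dynamics, stacked observer-error recursion, block upper-triangular augmented system, and the repeated summation identity for asymptotic optimality). Your final observation --- that since $\mathcal{A}=I_N\otimes I_q$ has all eigenvalues on the unit circle, stabilizability of $(\mathcal{A},\mathcal{B})$ reduces to full row rank (invertibility) of each $C_iB_i$, which is already required by the feedforward law \eqref{feedforword-controller-2} --- is a genuine addition not in the paper, and it usefully certifies, rather than merely assumes, the Riccati hypothesis.
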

\begin{proof}
This proof is similar to that of Theorem \ref{main-result-1} and Theorem \ref{asymptotically-optimal-theorem}. So the details are omitted.
\end{proof}

\begin{remark}
A new type of distributed control protocol is designed to achieve the consensus of heterogeneous multi-agent systems.
In our work, the key  techniques is to convert the relative error system \eqref{neighbour-error-system} or \eqref{output-error-system}
 into a standard linear system \eqref{standard-error-system} or \eqref{standard-output-linear system} through a feedforward control method;
 another key point is that the distributed-observer-based controllers \eqref{feedback-controller} or \eqref{feedback-controller-2} are designed by LQ optimal control theory based on available neighbour error information.
 Besides, one point should be noted that the  proposed new consensus control protocol is asymptotically optimal and is independent of the eigenvalues of communication topology.
\end{remark}

\section{Numerical Simulation}\label{sec:example}

In this section, we validate the the proposed theoretical
results through the following numerical example.

\begin{example}
\begin{figure}[!htbp]
\centering
\includegraphics[width=1.4in]{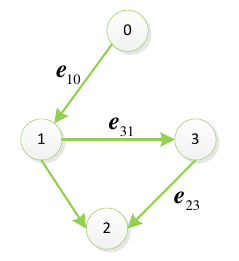}
\caption{Communication topology among four agents}
\label{fig:1-0}
\end{figure}
Consider the multi-agent system consisting of  three
heterogeneous follower agents with the system matrices:
\begin{align*}
A_{1}&=
\begin{bmatrix}
1.1 & 1\\
-2 & 1
\end{bmatrix},
A_{2}=
\begin{bmatrix}
-2 & 1  \\
1 & -1
\end{bmatrix},
A_3=\begin{bmatrix}
0.5  & 0  \\
-2 & -0.6
\end{bmatrix}
\notag\\
B_{1}&=\begin{bmatrix}
 0   & 1  \\
0.4   &  0.2
\end{bmatrix},
B_{2}=\begin{bmatrix}
 0.1   &  0.2 \\
 1   &  0.5
\end{bmatrix},
B_{3}=\begin{bmatrix}
 0.2   &  0 \\
 0.1   &  1
\end{bmatrix}
\end{align*}
The leader's state trajectory is generated by a sinusoidal trajectory generator with dynamics given by
\begin{align*}
A_{0}=\begin{bmatrix}
cos(0.5) & sin(0.5)\\
-sin(0.5) & cos(0.5)
\end{bmatrix}.
\end{align*}
The interactions of agents are given in Fig.\ref{fig:1-0}, which  satisfies  Assumption \ref{graph-assumption}.
 Each agent only receives neighbor error information, so we can
determine $H_i$ as
\begin{align*}
H_{1}=\begin{bmatrix}
 I_{2} & 0 & 0\\
 0 & 0 & I_{2}
\end{bmatrix},
H_{2}=\begin{bmatrix}
 0 & I_{2} & 0
\end{bmatrix},
H_{3}=\begin{bmatrix}
0 & I_{2} & 0\\
 0 & 0 & I_{2}
\end{bmatrix}.
\end{align*}
We choose
\begin{align*}
Q= R_{1}=R_{2}=R_{3}=I_{2}.
\end{align*}
According to ARE  \eqref{algebra-riccati-equation} and Lemma \ref{optimization-lemma},
 the feedback gains $K_{i}$ and the observer gain $L_{i}$ can be obtained, respectively.
 Fig.\ref{fig:1-2} shows that the observer error vectors $\tilde{E}_{i}(k)$ under the proposed controller \eqref{feedback-controller} converge to zero.
 The trajectories of the first state of the three followers and the leader are given in Fig.\ref{fig:1-3},
 and the consensus errors of the first state are displayed in Fig.\ref{fig:1-4},
  which show that the states of followers can synchronize with the state generated by the leader after 10 steps.
With the same initial conditions, applying the existing consensus algorithm in \cite{Huang2017},
Fig.\ref{fig:1-5}  and Fig.\ref{fig:1-6}  show the first state trajectories for four agents and the state error trajectories between agents and the leader, respectively.
To further compare the consensus speed with the traditional method, we calculate the spectral radius as $\rho(A_{c})=0.600$ by our method,
and according to the existing method,  the spectral radius is $\rho(A_{c})=0.9651$.
Therefore, the proposed distributed controller \eqref{distributed-controller} with \eqref{feedforword-controller},\eqref{feedback-controller} and \eqref{observer-design} can make all agents converge  to the leader with a faster convergence speed.


\begin{figure}[!htbp]
\centering
\includegraphics[width=2.5in]{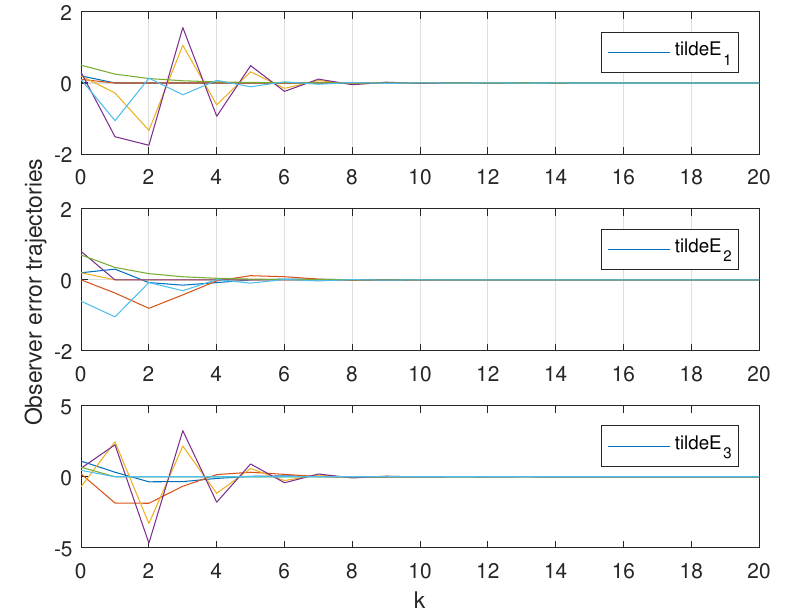}
\caption{Observer error trajectories $\tilde{E}_{i}(k)$.}
\label{fig:1-2}
\end{figure}

\begin{figure}[!htbp]
\centering
\includegraphics[width=2.5in]{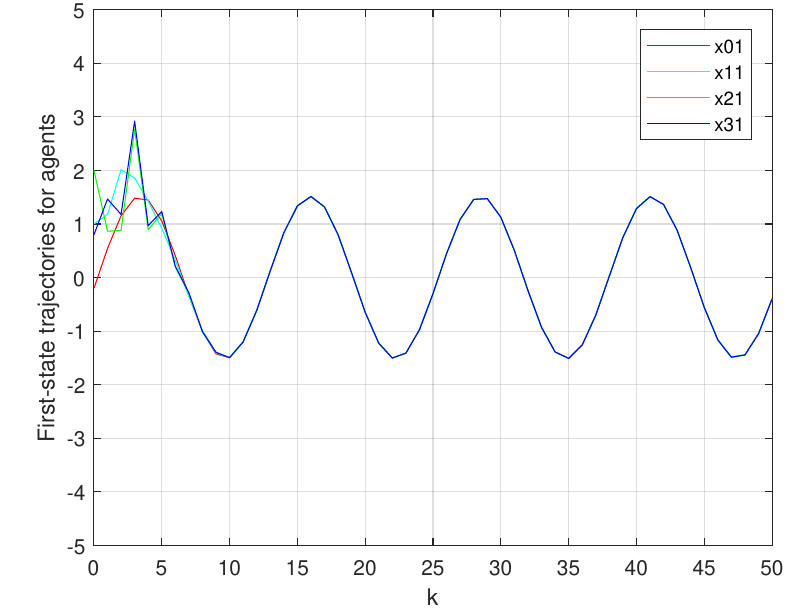}
\caption{The first state trajectories of each agent $x_{i}(k),i=0,1,2,3$.}
\label{fig:1-3}
\end{figure}

\begin{figure}[!htbp]
\centering
\includegraphics[width=2.5in]{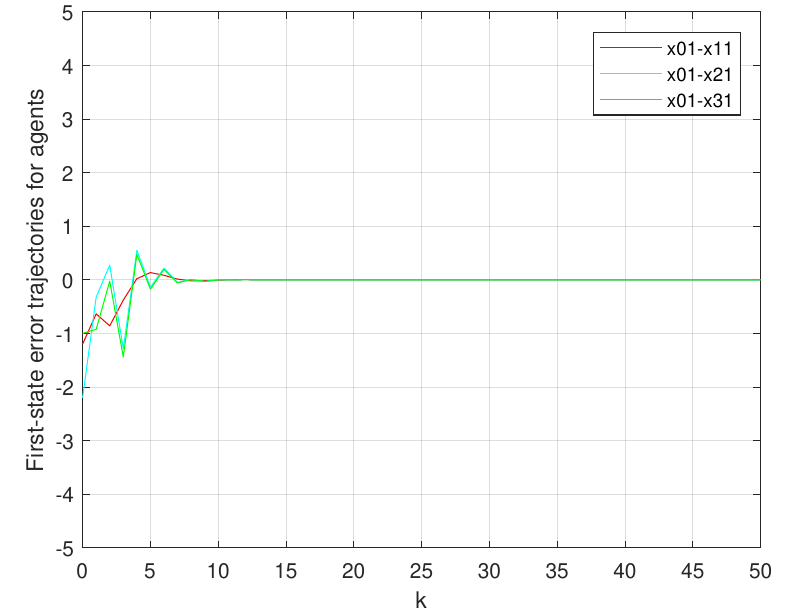}
\caption{The first state error trajectories by the proposed method.}
\label{fig:1-4}
\end{figure}

\begin{figure}[!htbp]
\centering
\includegraphics[width=2.5in]{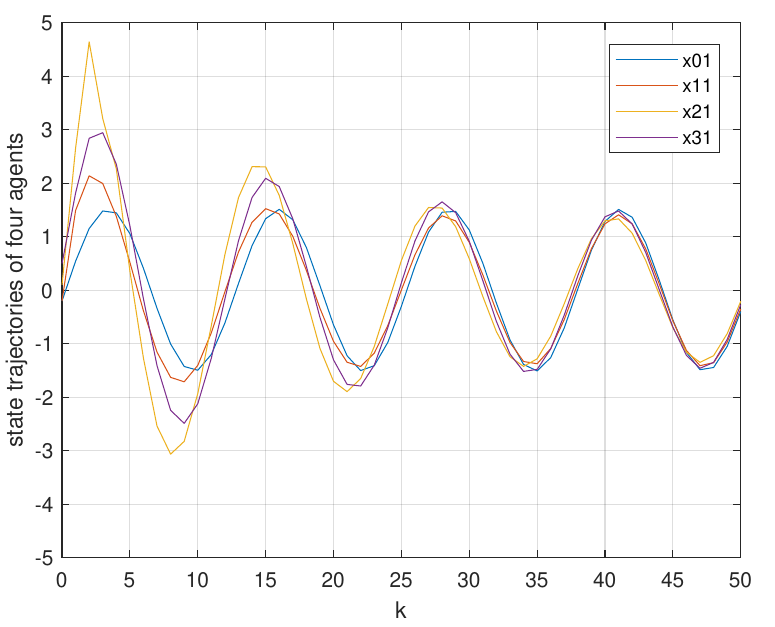}
\caption{The first state trajectories of each agent by the existing method.}
\label{fig:1-5}
\end{figure}

\begin{figure}[!htbp]
\centering
\includegraphics[width=2.5in]{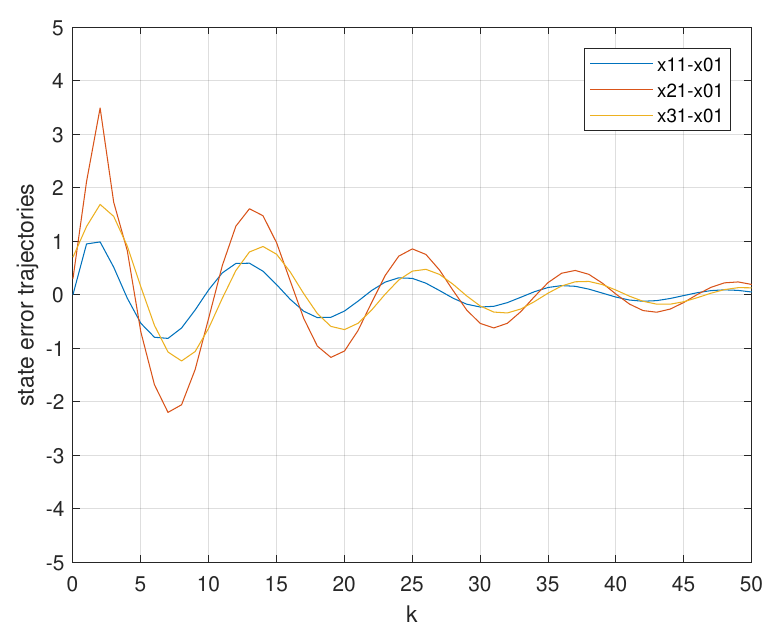}
\caption{The first state error trajectories by the existing method.}
\label{fig:1-6}
\end{figure}

%
%

\end{example}

\section{Conclusion}\label{sec:conclusion}
In this paper, we have studied the consensus problem for heterogeneous multi-agent systems by  a new feedforward control approach and optimal control theory.
Different from the traditional algorithms,
the proposed new distributed controller contains two parts:
a distributed feedforward controller was designed based on the information of agents and its neighbors,
and then an optimal consensus controller based on observers involving agent's historical state information was obtained by solving Riccati equations.
Different from the existing consensus control algorithms,
the current proposed algorithm has removed the requirements  for designing an additional distributed observer
and  for solving  a set of regulator equations.
It is shown that the corresponding cost function under the proposed distributed controllers is asymptotically optimal.
The proposed consensus algorithm can be directly applied to the  homogeneous systems case,
therefore, a unified design framework is provided to solve the consensus problem for heterogeneous system and homogeneous system.
Simulation example indicated the effectiveness of the proposed scheme, and  the proposed approach can achieve a faster convergence than the conventional algorithms.


\ifCLASSOPTIONcaptionsoff
  \newpage
\fi



%
\bibliographystyle{IEEEtran}
\bibliography{mybib}

\begin{thebibliography}{10}
\providecommand{\url}[1]{#1}
\csname url@samestyle\endcsname
\providecommand{\newblock}{\relax}
\providecommand{\bibinfo}[2]{#2}
\providecommand{\BIBentrySTDinterwordspacing}{\spaceskip=0pt\relax}
\providecommand{\BIBentryALTinterwordstretchfactor}{4}
\providecommand{\BIBentryALTinterwordspacing}{\spaceskip=\fontdimen2\font plus
\BIBentryALTinterwordstretchfactor\fontdimen3\font minus
  \fontdimen4\font\relax}
\providecommand{\BIBforeignlanguage}[2]{{%
\expandafter\ifx\csname l@#1\endcsname\relax
\typeout{** WARNING: IEEEtran.bst: No hyphenation pattern has been}%
\typeout{** loaded for the language `#1'. Using the pattern for}%
\typeout{** the default language instead.}%
\else
\language=\csname l@#1\endcsname
\fi
#2}}
\providecommand{\BIBdecl}{\relax}
\BIBdecl

\bibitem{Ren2007a}
W.~Ren, B.~R. W., and A.~E. M., ``Information consensus in multivehicle
  cooperative control,'' \emph{{IEEE} Control Systems Magazine}, vol.~27,
  no.~2, pp. 71--82, apr 2007.

\bibitem{Olfati-Saber2007}
R.~Olfati-Saber, J.~A. Fax, and R.~M. Murray, ``Consensus and cooperation in
  networked multi-agent systems,'' \emph{Proceedings of the {IEEE}}, vol.~95,
  no.~1, pp. 215--233, jan 2007.

\bibitem{Yang2022}
Y.~Yang, Y.~Xiao, and T.~Li, ``Attacks on formation control for multiagent
  systems,'' \emph{{IEEE} Transactions on Cybernetics}, vol.~52, no.~12, pp.
  12\,805--12\,817, dec 2022.

\bibitem{Olfati-Saber2004}
R.~Olfati-Saber and R.~Murray, ``Consensus problems in networks of agents with
  switching topology and time-delays,'' \emph{{IEEE} Transactions on Automatic
  Control}, vol.~49, no.~9, pp. 1520--1533, sep 2004.

\bibitem{Ren2007}
W.~Ren and E.~Atkins, ``Distributed multi-vehicle coordinated controlvia local
  information exchange,'' \emph{International Journal of Robust and Nonlinear
  Control}, vol.~17, no. 10-11, pp. 1002--1033, 2007.

\bibitem{Ma2010}
C.-Q. Ma and J.-F. Zhang, ``Necessary and sufficient conditions for
  consensusability of linear multi-agent systems,'' \emph{{IEEE} Transactions
  on Automatic Control}, vol.~55, no.~5, pp. 1263--1268, may 2010.

\bibitem{You2011}
K.~You and L.~Xie, ``Network topology and communication data rate for
  consensusability of discrete-time multi-agent systems,'' \emph{{IEEE}
  Transactions on Automatic Control}, vol.~56, no.~10, pp. 2262--2275, oct
  2011.

\bibitem{Huang2017a}
J.~Huang, ``The consensus for discrete-time linear multi-agent systems under
  directed switching networks,'' \emph{{IEEE} Transactions on Automatic
  Control}, vol.~62, no.~8, pp. 4086--4092, aug 2017.

\bibitem{Zhu2018}
Y.~Zhu, S.~Li, J.~Ma, and Y.~Zheng, ``Bipartite consensus in networks of agents
  with antagonistic interactions and quantization,'' \emph{{IEEE} Transactions
  on Circuits and Systems {II}: Express Briefs}, vol.~65, no.~12, pp.
  2012--2016, dec 2018.

\bibitem{Xu2021}
J.~Xu, Z.~Zhang, and W.~Wang, ``Mean-square consentability of multiagent
  systems with nonidential channel fading,'' \emph{{IEEE} Transactions on
  Automatic Control}, vol.~66, no.~4, pp. 1887--1894, apr 2021.

\bibitem{Xu2018}
L.~Xu, J.~Zheng, N.~Xiao, and L.~Xie, ``Mean square consensus of multi-agent
  systems over fading networks with directed graphs,'' \emph{Automatica},
  vol.~95, pp. 503--510, Sep. 2018.

\bibitem{Su2012a}
Y.~Su and J.~Huang, ``Cooperative output regulation of linear multi-agent
  systems,'' \emph{{IEEE} Transactions on Automatic Control}, vol.~57, no.~4,
  pp. 1062--1066, apr 2012.

\bibitem{Huang2017}
J.~Huang, ``The cooperative output regulation problem of discrete-time linear
  multi-agent systems by the adaptive distributed observer,'' \emph{{IEEE}
  Transactions on Automatic Control}, vol.~62, no.~4, pp. 1979--1984, apr 2017.

\bibitem{Bi2022}
C.~Bi, X.~Xu, L.~Liu, and G.~Feng, ``Robust cooperative output regulation of
  heterogeneous uncertain linear multiagent systems with unbounded distributed
  transmission delays,'' \emph{{IEEE} Transactions on Automatic Control},
  vol.~67, no.~3, pp. 1371--1383, 2022.

\bibitem{Lu2018}
M.~Lu and L.~Liu, ``Leader-following consensus of multiple uncertain
  euler{\textendash}lagrange systems subject to communication delays and
  switching networks,'' \emph{{IEEE} Transactions on Automatic Control},
  vol.~63, no.~8, pp. 2604--2611, aug 2018.

\bibitem{Yaghmaie2017}
F.~A. Yaghmaie, R.~Su, F.~L. Lewis, and S.~Olaru, ``Bipartite and cooperative
  output synchronizations of linear heterogeneous agents: A unified
  framework,'' \emph{Automatica}, vol.~80, pp. 172--176, jun 2017.

\bibitem{Wieland2011}
P.~Wieland, R.~Sepulchre, and F.~Allgöwer, ``An internal model principle is
  necessary and sufficient for linear output synchronization,''
  \emph{Automatica}, vol.~47, pp. 1068--1074, 2011.

\bibitem{Zuo2017}
S.~Zuo, Y.~Song, F.~L. Lewis, and A.~Davoudi, ``Output containment control of
  linear heterogeneous multi-agent systems using internal model principle,''
  \emph{{IEEE} Transactions on Cybernetics}, vol.~47, no.~8, pp. 2099--2109,
  aug 2017.

\bibitem{Cao2010}
Y.~Cao and W.~Ren, ``Optimal linear-consensus algorithms: An lqr perspective,''
  \emph{IEEE Transactions on Systems, Man, and Cybernetics, Part B
  (Cybernetics)}, vol.~40, no.~3, pp. 819--830, Jun. 2010.

\bibitem{Sun2021}
H.~Sun, Y.~Liu, and F.~Li, ``Optimal consensus via distributed protocol for
  second-order multiagent systems,'' \emph{IEEE Transactions on Systems, Man,
  and Cybernetics: Systems}, vol.~51, no.~10, pp. 6218--6228, Oct. 2021.

\bibitem{Movric2014}
K.~H. Movric and F.~L. Lewis, ``Cooperative optimal control for multi-agent
  systems on directed graph topologies,'' \emph{{IEEE} Transactions on
  Automatic Control}, vol.~59, no.~3, pp. 769--774, mar 2014.

\bibitem{Chen2020}
F.~Chen and J.~Chen, ``Minimum-energy distributed consensus control of
  multiagent systems: A network approximation approach,'' \emph{{IEEE}
  Transactions on Automatic Control}, vol.~65, no.~3, pp. 1144--1159, mar 2020.

\bibitem{Zhang2021}
Z.~Zhang, W.~Yan, and H.~Li, ``Distributed optimal control for linear
  multiagent systems on general digraphs,'' \emph{IEEE Transactions on
  Automatic Control}, vol.~66, no.~1, pp. 322--328, Jan. 2021.

\bibitem{Jiao2020}
J.~Jiao, H.~L. Trentelman, and M.~K. Camlibel, ``A suboptimality approach to
  distributed linear quadratic optimal control,'' \emph{IEEE Transactions on
  Automatic Control}, vol.~65, no.~3, pp. 1218--1225, Mar. 2020.

\bibitem{Kiumarsi2017}
B.~Kiumarsi and F.~L. Lewis, ``Output synchronization of heterogeneous
  discrete-time systems: A model-free optimal approach,'' \emph{Automatica},
  vol.~84, pp. 86--94, Oct. 2017.

\bibitem{Rizvi2019}
S.~A.~A. Rizvi and Z.~Lin, ``Output feedback reinforcement learning based
  optimal output synchronisation of heterogeneous discrete‐time multi‐agent
  systems,'' \emph{IET Control Theory \&amp; Applications}, vol.~13, no.~17,
  pp. 2866--2876, Nov. 2019.

\bibitem{Chen2020a}
C.~Chen, F.~L. Lewis, K.~Xie, S.~Xie, and Y.~Liu, ``Off-policy learning for
  adaptive optimal output synchronization of heterogeneous multi-agent
  systems,'' \emph{Automatica}, vol. 119, p. 109081, Sep. 2020.

\bibitem{Anderson1971}
J.~M. B.D.O.~Anderson, \emph{Linear optimal control}.\hskip 1em plus 0.5em
  minus 0.4em\relax Prentice Hall, 1971.

\bibitem{Zhang2023}
\BIBentryALTinterwordspacing
L.~Zhang, J.~Xu, H.~Zhang, and L.~Xie, ``Distributed optimal control and
  application to consensus of multi-agent systems,'' 2023. [Online]. Available:
  \url{https://doi.org/10.48550/arXiv.2309.12577}
\BIBentrySTDinterwordspacing

\bibitem{Zhang2020}
J.~Zhang, T.~Feng, X.~Wang, S.~Qiao, and F.~Yan, ``Output consensus for
  heterogeneous multi-agent systems with disturbances,'' \emph{Journal of the
  Franklin Institute}, vol. 357, no.~4, pp. 2457--2470, mar 2020.

\end{thebibliography}

\end{document}